\documentclass[ejsv2,preprint]{imsart}

\RequirePackage[numbers]{natbib}
\usepackage{titlesec}
\usepackage{orcidlink}
\usepackage{comment}
\usepackage{appendix}
\newtheorem{prop}{Proposition}[section]
\newtheorem*{prop*}{Proposition}
\newtheorem{corollary}{Corollary}[section]

\newtheorem{lemma}{Lemma}[section]

\newcommand{\Expect}[1]{\mathbb{E}\left[ #1 \right]}

\newcommand{\diffd}{\textnormal{d}}

\newcommand{\inprod}[2]{\left\langle #1,#2\right\rangle}

\newcommand{\norm}[1]{\left|\left|  #1 \right| \right|}

\newcommand{\SPAN}[1]{\text{span}\left(#1\right)}

\newcommand{\weakto}{\rightharpoonup}

\begin{document}
\begin{frontmatter}
\title{An RKHS Framework for Fixed Effects in Permanental Process Models}

\runtitle{Permanental Processes with Fixed Effects}

\begin{aug}
\author[A]{\fnms{Matthew}~\snm{LeDuc}\ead[label=e1]{matthew.leduc@colorado.edu}\orcid{0009-0006-5892-8823}}

\address[A]{Department of Applied Mathematics,
University of Colorado Boulder\printead[presep={,\ }]{e1}}
\runauthor{M. LeDuc }
\end{aug}

\begin{abstract}
This short work describes an extension of the permanental process model which includes fixed effects. By starting with a prior on the fixed effects coefficients we show that, in the diffuse prior limit, the intensity function of the permanental process can be found using the representer theorem and naturally decomposed into a fixed effects term and a function which is an element of a Reproducing Kernel Hilbert Space (RKHS). We show that the limiting equivalent kernel defines an RKHS whose squared norm is exactly the limiting penalty. This allows for straightforward scientific interpretation of permanental process models and the easy incorporation of domain knowledge into the estimation process. 
\end{abstract}


\begin{keyword}
\kwd{Point processes}
\kwd{Reproducing Kernel Hilbert Spaces}
\end{keyword}

\end{frontmatter}

\section{Introduction}

Many common datasets, such as the location of trees of a certain species, the failure time of a set of mechanical parts, the locations of fish catches, or the detections of remotely sensed emissions, can be described as a random set of spatial or temporal locations at which specific events occurred. These datasets are commonly modeled as a realization of a Poisson Point Process \cite{streit.ppp}, which posits that for some latent function $\lambda(s)$, the number of events $N$ in a measurable set $D$ is Poisson distributed with mean 
\begin{equation}
    \Expect{N} = \int_D\lambda(s)\diffd\mu(s)
\end{equation}
where $\mu(s)$ is the ambient measure on $D$. In many applications, $\lambda(s)$ varies substantially over space and time, making highly flexible models desirable. One widely used class of models, known as Cox process models, generalizes $\lambda(s)$ to be a random function \cite{coxprocesses}. Of these, perhaps the most commonly used is the log-Gaussian Cox Process, which models $\lambda(s)$ in the form
\begin{equation}
    \lambda(s)=\exp\left(f(s)\right)
\end{equation}
for some Gaussian process $f(s)$ \cite{moller_lgcp}. These models are popular for their flexibility and interpretability, however inference can be computationally demanding, particularly for large datasets or finely discretized spatial domains. This motivates the development of other Cox process models that retain the flexibility of the log-Gaussian Cox process while remaining computationally tractable.

The permanental process \cite{mccullagh_permproc} is a particular case of the Cox process, where the intensity function $\lambda(s)$ has the form
\begin{equation}
    \begin{split}
        \lambda(s) = &\frac{1}{2}\sum_{j=1}^Nf_j(s)\\
        f_j(s)\sim& \mathcal{GP}(0,k(s,t))
    \end{split}
\end{equation}
Its name is due to the fact that the order-n product density is given by 
\cite{mccullagh_permproc}
\begin{equation}
    \rho^{(n)}(s_1,...,s_n)=\Expect{\prod_{i=1}^n\lambda(s_i)} = \text{per}_{\alpha}(\mathbf{K})
\end{equation}
where $\mathbf{K}_{ij}=k(s_i,s_j)$, $\alpha=n/2$, and $\text{per}_{\alpha}$ is the $\alpha$-weighted permanent:
\begin{equation}
    \text{per}_{\alpha}(\mathbf{K})=\sum_{\sigma(1,...,n)}\alpha^{\#\sigma}\prod_{i=1}^nK(s_i,s_{\sigma(i)})
\end{equation}
where $\sigma(1,...,n)$ is the set of permutations of $1,...,n$ and $\#\sigma$ is the number of cycles in the permutation. They also exhibit point clustering, making them a natural model for processes such as the location of trees in a forest that exhibit this behavior \cite{moller2005properties}.

Models based on the permanental process have found application in a wide variety of areas such as survival analysis, remote sensing modeling of point patterns on networks,  spatial transportation data, wildfire modeling, and classification \cite{kim2023survivalpermprocs,PoissonRatioUQ,mine_permproc,coxprocessesnetworks,sellier2023sparse,thongtha2023normal,yang2012permprocclassification}. This is in part because they are particularly favorable computationally due to the ability to write the solution as a member of a Reproducing Kernel Hilbert Space (RKHS) and leverage the representer theorem to write the latent function $f$ at an arbitrary location as a weighted sum $f(s)=\sum\alpha_ik(s_i,s)$, leading to a finite-dimensional optimization problem \cite{pmlr-v54-flaxman17a,pmlr-v70-walder17a}. 
While the structure of the permanental process model as described assumes that the latent process $f(s)$ has no mean trend, in many point processes, such as the locations of trees or the location of catches of certain species of fish, the intensity will vary as a function of, for example, soil quality, elevation, salinity, or sea surface temperature. It is reasonable, then, to question whether a model of the form
\begin{equation}\label{eq:lambda_fixedeffects}
    \begin{split}
        \lambda(s)=&\frac{c}{2}f(s)^2 \\
        f(s)=&X(s)\beta+g(s)\\
        g(s)\sim&\mathcal{GP}(0,k(s,t))
    \end{split}
\end{equation}
can be fit using a similar procedure. In this paper, we demonstrate that the assumed kernel can be modified in a consistent way to allow the estimation of the intensity function in the same manner as \cite{pmlr-v54-flaxman17a} via the representer theorem, but without penalizing the directions along the columns of $X$. 

This has many natural comparisons, for example smoothing spline estimation wherein elements of a finite-dimensional null-space of the penalty, for example terms of the form $ax+b$, are left free and a smoothness penalty is imposed on the remainder \cite{kimeldorfwahba1970correspondence,wahba1990spline}. Here, in the diffuse prior limit the columns of $X$ play a similar role in denoting the directions along which we do not want to penalize estimation, while the remainder is regularized according to a covariance kernel $k(s,t)$. We will show that, under relatively mild assumptions on $X$, namely that the fixed effects $x_j(s)$ are continuous and linearly independent functions of space, the diffuse-prior limit defines a valid RKHS in which the representer theorem continues to apply, and also yields a straightforward decomposition allowing the recovery of the fixed effect coefficients. Notably, this is possible even if the fixed effect functions $x_j(s)$ are not elements of the RKHS $\mathcal{H}_K$.


\section{RKHS Formulation and the Diffuse-Prior Limit}

In this section, we will derive the RKHS formulation and show that, in the diffuse prior limit as $\tau\to\infty$, we are able to write the solution as an element of a RKHS and convert the problem into a finite-dimensional optimization problem. We will begin by listing the assumptions that are necessary to do this, and then describe the limiting behavior of both the equivalent kernel and penalty and show that they correspond, allowing application of the representer theorem to the solution of the problem \cite{gen_representer_thm}.
\subsection{Assumptions}

Before we begin, we must describe the assumptions of this model. These are generally not overly restrictive, however each of them are necessary for this model.

First, as done in \cite{pmlr-v54-flaxman17a} and \cite{pmlr-v70-walder17a}, we will assume that the constants $c,\gamma$ are both positive, real constants. Now we assume that the point process is observed on $\Omega$ a compact domain equipped with a finite measure $\mu$, and let
$\mathcal{L}^2(\Omega)=\mathcal{L}^2(\Omega,\mu)$. Suppose that the kernel $k:\Omega\times\Omega\to\mathbb{R}$ is continuous, symmetric, and positive-definite with associated integral operator 
\begin{equation}
        Kf(s)
        =
        \int_{\Omega} k(s,t)f(t)\,\diffd\mu(t).
\end{equation}
This gives us that the operator $K:\mathcal{L}^2(\Omega)\to C(\Omega)$ is bounded, self-adjoint, and positive. Throughout, we will refer to the kernel function with lowercase letters and the integral operator associated with the kernel using uppercase. 

Lastly, we assume that the fixed-effects operator $X$ has the form
\begin{equation}
    X\beta=\sum_{j=1}^p\beta_jx_j(s)
\end{equation}
with each $x_j\in C(\Omega)$ linearly independent of the rest, equivalently the operator $X:\mathbb{R}^p\to C(\Omega)$ is injective. Notably, we do not require that 
$\operatorname{span}\{x_1,\ldots,x_p\}
        \subset \mathcal{H}_K$ or that 
        \\ $\operatorname{span}\{x_1,\ldots,x_p\}
        \bigcap \mathcal{H}_K=\{0\}$.

\subsection{The Kernel for finite $\tau$} 
We consider the problem of estimating the latent intensity of a point process $P$ with intensity function $\lambda(s)$ having the form in Eq. \eqref{eq:lambda_fixedeffects}. By incorporating the prior $\beta\sim N(0,\tau^2I)$, we have the penalized log-likelihood
\begin{equation}\label{eq:k_kernel_tau}
    \ell(f)=\sum_{i=1}^d\log\left(\frac{c}{2}f(s_i)^2\right) - \frac{c}{2}\int_{\Omega}f(s)^2 \diffd s-\frac{\gamma}{2}\|g\|_{\mathcal{H}_K}^2-\frac{1}{\tau^2}\|\beta\|_2^2
\end{equation}
By integrating $\beta$ out of the prior and noting that $g(s)=f(s)-X(s)\beta$, we can obtain the equivalent formulation
\begin{equation}
    \label{eq:permproclike_marginalized}
    \begin{split}
    \ell(f) = &\sum_{i=1}^d\log\left(\frac{c}{2}f(s_i)^2\right) - \frac{c}{2}\int_{\Omega}f(s)^2 \diffd s-\frac{\gamma}{2}\|f\|_{\mathcal{H}_{K_\tau}}^2 \\
k_\tau(s,t) = & k(s,t)+\tau^2 X(s)X(t)^*
    \end{split}
\end{equation}
Since $X:\mathbb{R}^p\to C(\Omega)$ is a finite rank linear operator it is bounded, and the kernel $\tau^2 X(s)X(t)^*$ defines a finite-dimensional RKHS given by $\SPAN{ x_1(s),...,x_p(s) }$. Thus, $k_{\tau}(s,t)$ defines its own RKHS $\mathcal{H}_{K_\tau}$ with norm \cite{rkhs_book}
\begin{equation}\label{eq:Ktau_norm}
\|f\|_{K_\tau}^2 =\underset{f=g+X\beta,g\in \mathcal{H}_K}{\inf}\left(  \|g\|_{K}^2+\frac{1}{\tau^2}\|\beta\|_2^2\right)
\end{equation}

Since the integral term in Eq. \eqref{eq:permproclike_marginalized} is $\|f\|_2^2$, we can represent both norms as a single quadratic form of $f$ as 
\begin{equation}
    \frac{c}{2}\|f\|_2^2+\frac{\gamma}{2}\|f\|_{K_\tau}^2=\frac{1}{2}\inprod{f}{\left(cI+\gamma K_\tau^{-1}\right) f}=\frac{1}{2}\inprod{f}{R_\tau^{-1} f}
\end{equation}
As shown in \cite{pmlr-v54-flaxman17a}, for finite $\tau$ the kernel $R_{\tau}= K_\tau\left(cK_\tau+\gamma I\right)^{-1}$ defines its own RKHS which is norm-equivalent to that induced by $K_\tau$. However, the behavior as $\tau\to \infty$ is more complex, and the subject of the following section.

\subsection{Limiting Behavior of the Kernel}

The parameter $\tau$ represents the prior standard deviation of the parameters $\beta$, and as $\tau\to\infty$ the prior on these coefficients becomes more and more diffuse and the penalty on the directions aligned with $X(s)$ vanishes. It is natural, therefore, to wonder whether the operators $R_\tau$ converge to a well-defined limit and, if so, to analyze the properties of this limit. 

To proceed, we examine the form of $R_\tau$ for finite $\tau$. It can be seen that, letting $A=cK+\gamma I$, $R_\tau=\frac{1}{c}\left(I -\gamma\left(A^{-1}-A^{-1}X\left(\frac{1}{c\tau^2}+X^*A^{-1}X\right)^{-1}X^*A^{-1}\right)\right)$, motivating the following lemma.
\begin{lemma}\label{lemma:Rinfty}
    $R_\tau$ converges uniformly to the operator $R_{\infty}$ given by
    \begin{equation}
       R_\infty=\frac{1}{c}\left(I -\gamma\left(A^{-1}-A^{-1}X\left(X^*A^{-1}X\right)^{-1}X^*A^{-1}\right)\right) 
    \end{equation}
    as $\tau\to\infty$.
\end{lemma}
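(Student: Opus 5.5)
Taking the finite-$\tau$ expression for $R_\tau$ displayed just before the statement as given, the difference $R_\tau - R_\infty$ involves only the middle factor:
\begin{equation*}
R_\tau - R_\infty = \frac{\gamma}{c}A^{-1}X\left[\left(\frac{1}{c\tau^2}I_p + M\right)^{-1} - M^{-1}\right]X^*A^{-1}, \qquad M := X^*A^{-1}X .
\end{equation*}
The question therefore reduces to a $p\times p$ matrix $M$. Uniform (operator-norm) convergence will follow from two facts. First, the outer factors $A^{-1}X$ and $X^*A^{-1}$ are bounded and independent of $\tau$. Second, the bracketed $p\times p$ matrix tends to zero in norm. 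The plan is to establish well-posedness of all the objects involved and then bound the bracket.

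\textbf{Step 1: boundedness of the outer factors.} The operator $K$ is bounded, self-adjoint and positive on $\mathcal{L}^2(\Omega)$, and $c,\gamma>0$. Hence $A = cK+\gamma I \succeq \gamma I$, so $A$ is invertible with $\|A^{-1}\|\le 1/\gamma$. Each $x_j$ is continuous on the compact set $\Omega$ and $\mu$ is finite, so $x_j\in\mathcal{L}^2(\Omega)$. Consequently $X:\mathbb{R}^p\to\mathcal{L}^2(\Omega)$ is a bounded finite-rank operator, and its adjoint $X^*f = (\langle x_j,f\rangle)_j$ is bounded as well.

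\textbf{Step 2: invertibility of $M$.} This is the step where the assumptions genuinely enter, and it is the main point to check. The matrix $M$ is symmetric, and for $\beta\in\mathbb{R}^p$ we have $\beta^\top M\beta = \langle X\beta, A^{-1}X\beta\rangle$. Since $A\preceq (c\|K\|+\gamma)I$, we get $A^{-1}\succeq (c\|K\|+\gamma)^{-1}I$, and therefore
\begin{equation*}
\beta^\top M\beta \ge \frac{\|X\beta\|_2^2}{c\|K\|+\gamma}.
\end{equation*}
Injectivity of $X$ is assumed as a map into $C(\Omega)$, but we need $\|X\beta\|_{\mathcal{L}^2}>0$ for $\beta\neq 0$. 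This holds if $\mu$ charges every nonempty open set, since a nonzero continuous function is then nonzero in $\mathcal{L}^2$. If $\mu$ need not have full support, I would instead read the assumption as linear independence of the $x_j$ in $\mathcal{L}^2(\Omega,\mu)$ and make this explicit. Either way, the Gram matrix $X^*X$ is positive definite. It follows that $M\succeq \lambda_{\min}(X^*X)\,(c\|K\|+\gamma)^{-1} I =: m I$ with $m>0$.

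\textbf{Step 3: the bracket tends to zero.} Set $\varepsilon = 1/(c\tau^2)$. Both $M$ and $M+\varepsilon I$ are symmetric positive definite with smallest eigenvalue at least $m$. The resolvent identity gives
\begin{equation*}
(M+\varepsilon I)^{-1}-M^{-1} = -\varepsilon (M+\varepsilon I)^{-1}M^{-1},
\end{equation*}
whose norm is at most $\varepsilon/m^2$. Hence
\begin{equation*}
\|R_\tau - R_\infty\| \le \frac{\gamma}{c}\cdot\frac{\|X\|^2}{\gamma^2}\cdot\frac{1}{c\tau^2 m^2} = O(\tau^{-2}),
\end{equation*}
which gives uniform convergence.

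For completeness, I would also briefly verify the displayed finite-$\tau$ formula. One writes $K_\tau = K + \tau^2XX^*$, so that $R_\tau = K_\tau(cK_\tau+\gamma I)^{-1} = \frac{1}{c}\bigl(I - \gamma(cK_\tau+\gamma I)^{-1}\bigr)$, and then applies the Woodbury identity to $(A + c\tau^2 XX^*)^{-1}$. This is routine. Since the paper states the formula, it can be taken as given.
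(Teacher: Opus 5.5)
Your proof is correct and follows the paper's argument essentially line for line: both isolate the $p\times p$ middle factor, apply the identity $(S+\varepsilon I)^{-1}-S^{-1}=-\varepsilon(S+\varepsilon I)^{-1}S^{-1}$, and bound it spectrally to get an $O(\tau^{-2})$ rate in operator norm. Your Step 2 adds rigor the paper lacks. The paper simply asserts that $S=X^*A^{-1}X$ is invertible, whereas you derive a quantitative lower bound and correctly note that injectivity of $X$ into $C(\Omega)$ makes the Gram matrix positive definite only when $\mu$ charges nonempty open sets, or when linear independence is assumed in $\mathcal{L}^2(\Omega,\mu)$.
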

\begin{proof}
We wish to show that, as $\tau\to\infty$, $\|R_\tau-R_\infty\|\to 0$. Letting $S=X^*A^{-1}X$, we can say that
\begin{equation}
\begin{split}
    \|R_\tau-R_\infty\| = &\frac{\gamma}{c}\norm{ A^{-1}X\left(\left(\frac{1}{c\tau^2}I+S\right)^{-1}-S^{-1}\right)X^*A^{-1}}\\
    \le & \frac{\gamma}{c}\norm{X^*A^{-1}}^2\norm{\left(\frac{1}{c\tau^2}I+S\right)^{-1}-S^{-1}}\\
    = &\frac{\gamma}{c^2\tau^2}\norm{X^*A^{-1}}^2 \norm{\left(\frac{1}{c\tau^2}I+S\right)^{-1}S^{-1}}
    \end{split}
\end{equation}
Now $S$ is invertible, and by the spectral theorem can be diagonalized, so we have that
\begin{equation}
    \|R_\tau-R_\infty\| \le \frac{\gamma}{c^2\tau^2}\norm{X^*A^{-1}}^2 \underset{j}{\max}\frac{1}{\lambda_j(\lambda_j+(c\tau^2)^{-1})}
\end{equation}
which approaches zero as $\tau\to\infty$, as desired. Thus $R_\tau \to R_\infty$ uniformly.
\end{proof}
Now we wish to demonstrate that $R_\infty$ defines an RKHS on its own. This is the objective of the following proposition:
\begin{prop}\label{prop:Rinfty_defines_rkhs}
    Under the assumptions so far described, the operator $R_{\infty}$ is the kernel operator of an RKHS $\mathcal H_{R_\infty}$.
\end{prop}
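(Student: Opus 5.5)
The plan is to rewrite $R_\infty$ so that the identity term disappears. It then becomes visibly a sum of two operators, each with a continuous, symmetric, positive semi-definite kernel, and the Moore--Aronszajn theorem gives the RKHS.

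As written in Lemma~\ref{lemma:Rinfty}, $R_\infty$ contains the term $\frac{1}{c}I$, which is not an integral operator with a continuous kernel. So the first step is to absorb it. With $A=cK+\gamma I$ we have $I-\gamma A^{-1}=A^{-1}(A-\gamma I)=cKA^{-1}$. Substituting this gives
\begin{equation*}
R_\infty = KA^{-1} + \frac{\gamma}{c}\,A^{-1}X\left(X^*A^{-1}X\right)^{-1}X^*A^{-1} =: T_1+T_2 .
\end{equation*}

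For $T_1=K(cK+\gamma I)^{-1}$, I would use Mercer's theorem. Write $k(s,t)=\sum_i \mu_i e_i(s)e_i(t)$ with $\mu_i\ge 0$ and uniform convergence. Since $K$ and $A^{-1}$ commute, $T_1$ has kernel
\begin{equation*}
r_1(s,t)=\sum_i \frac{\mu_i}{c\mu_i+\gamma}\,e_i(s)e_i(t).
\end{equation*}
This kernel is symmetric and positive semi-definite. To see that it is continuous, I would argue in either of two ways.
\begin{itemize}
\item Write $r_1 = \frac{1}{\gamma}\big(k - c\,K r_1(\cdot,t)\big)$, i.e.\ use the resolvent identity $KA^{-1}=\frac{1}{\gamma}(K - cK\,KA^{-1})$, together with the fact that $K$ maps $\mathcal L^2$ into $C(\Omega)$.
\item Alternatively, note that $\frac{\mu_i}{c\mu_i+\gamma}\le \mu_i/\gamma$, so the series is dominated by the uniformly convergent Mercer series.
\end{itemize}

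For $T_2$, let $a_j=A^{-1}x_j$. The identity $a_j=\frac{1}{\gamma}(x_j-cKa_j)$ shows that each $a_j$ is continuous, since $x_j\in C(\Omega)$ and $K$ maps into $C(\Omega)$. The matrix $S=X^*A^{-1}X$, with entries $\inprod{x_i}{A^{-1}x_j}$, is symmetric positive definite. This holds because $A^{-1}$ is strictly positive (its spectrum lies in $(0,1/\gamma]$) and $X$ is injective. Then $T_2$ is a finite-rank operator with kernel
\begin{equation*}
r_2(s,t)=\frac{\gamma}{c}\,a(s)^\top S^{-1}a(t), \qquad a(s)=(a_1(s),\dots,a_p(s))^\top .
\end{equation*}
This kernel is continuous, symmetric, and positive semi-definite, since $\sum_{i,l}\alpha_i\alpha_l r_2(s_i,s_l)=\frac{\gamma}{c}v^\top S^{-1}v\ge 0$ with $v=\sum_i\alpha_i a(s_i)$.

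Hence $r_\infty=r_1+r_2$ is a continuous, symmetric, positive semi-definite kernel on the compact set $\Omega$. By Moore--Aronszajn it is the reproducing kernel of a unique RKHS $\mathcal H_{R_\infty}$, and $R_\infty$ is its integral operator.

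I expect the main obstacle to be the step where injectivity of $X$ is used to make $S$ invertible. The assumption is injectivity into $C(\Omega)$, but $S$ involves the $\mathcal L^2(\Omega,\mu)$ inner product. These agree when $\mu$ has full support, because then a continuous function vanishing $\mu$-a.e.\ vanishes identically. I would either state this explicitly or note that Lemma~\ref{lemma:Rinfty} already presupposes $S^{-1}$ exists. As a sanity check on positivity, and an alternative route, one can also note that $R_\infty$ is the uniform limit of the positive operators $R_\tau$, so it is positive.
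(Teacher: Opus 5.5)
Your proposal is correct and follows essentially the same route as the paper: write $r_\infty$ as the equivalent kernel $r_0$ (your $r_1$, obtained from $I-\gamma A^{-1}=cKA^{-1}$) plus a finite-rank term in the $a_j=A^{-1}x_j$, show each $a_j$ is continuous via $a_j=\frac{1}{\gamma}(x_j-cKa_j)$, and invoke Moore--Aronszajn. Your version is slightly more careful than the paper's in four ways: you prove continuity of $r_0$ rather than citing it, you check positive semi-definiteness directly instead of only through the uniform limit, you keep the factor $\gamma/c$ on the finite-rank term that the paper's formula for $r_\infty$ drops, and you note that invertibility of $S$ needs $X$ to be injective into $\mathcal{L}^2(\Omega,\mu)$ (e.g.\ $\mu$ of full support).
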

\begin{proof}
    Since $K_\tau$ and $(cK_\tau+\gamma I)^{-1}$ commute for finite $\tau$, $R_\infty$ is a uniform limit of bounded, positive, and self-adjoint operators by Lemma \ref{lemma:Rinfty} and thus inherits those properties. Now we need to demonstrate that the kernel function $r_\infty$ is continuous on $\Omega\times \Omega$ and thus is a reproducing kernel by the Moore–Aronszajn theorem \cite{rkhs_book}. The kernel function $r_\infty$ is given by
\begin{equation}
    \label{eq:rinfty_kernelfn}
    \begin{split}
        r_\infty(s,t) =& r_0(s,t)+\sum_{i,j}S^{-1}_{i,j}a_i(s)a_j(t)\\
        a_i(s) =& A^{-1}x_i(s) 
    \end{split}
\end{equation}
where $r_0(s,t)$ is the kernel function corresponding to $\tau=0$, the same kernel function denoted the equivalent kernel in \cite{pmlr-v54-flaxman17a}. So, in essence, the addition of fixed effects introduces a finite-rank perturbation to this equivalent kernel. Since $r_0(s,t)$ is continuous, continuity of $r_\infty(s,t)$ is equivalent to showing that the finite-rank perturbation is continuous, and therefore that the kernel $r_\infty(s,t)$ is continuous. Combined with the properties of $R_\infty$ derived from uniform convergence of the operators, this yields the desired result. 

To prove this,we can show that the functions $a_i(s)$ are continuous, and then the finite rank term is continuous. Since the $x_i(s)$ are continuous, this is equivalent to proving that $A^{-1}:C(\Omega)\to C(\Omega)$. Proceed by contradiction: Suppose that there is some $h(s)\in \mathcal{L}^2(\Omega)$ that is the image of a continuous function $g(s)$ under $A^{-1}$, but is not continuous itself. Then $g(s)=Ah(s)=cK h(s)+\gamma h(s)$. But then $h(s)$ must be continuous as $K:\mathcal{L}^2(\Omega)\to C(\Omega)$ and $g(s)$ is continuous. Therefore, by contradiction $A^{-1}$ maps $C(\Omega)$ to $C(\Omega)$. Since we have assumed that all $x_i(s)$ are continuous, this implies that the $a_i(s)$ are continuous, and thus that $r_\infty$ is a continuous and positive definite kernel function on $\Omega\times \Omega$. Thus, it is a reproducing kernel and $R_\infty$ defines an RKHS.
\end{proof}
This tells us that the diffuse prior limit $\tau\to\infty$ defines an RKHS with kernel given by $r_{\infty}$. However, it remains to be shown that the limiting penalty as $\tau \to \infty$ corresponds to the norm in this same space. The limiting penalty is obtained as the limit of a family of quadratic penalties, while Proposition \ref{prop:Rinfty_defines_rkhs} establishes only the existence of the limiting RKHS and not that the limit of these penalties corresponds to the RKHS. This is a natural extension of the fact that, for each $\tau$, $R_\tau$ and $K_\tau$ are operators which define equivalent norms on the same function space, which is a significant motivation for the application of the work in \cite{pmlr-v54-flaxman17a}: by modifying the problem in this manner, we do not distort the properties of the kernel which motivated its choice to begin with.

\subsection{Limiting Behavior of the Penalty}

To obtain an analogous result to the equivalence between the norms induced by $R_\tau$ and $K_\tau$ for finite $\tau$, we must determine the limiting form of the penalty term as $\tau\to\infty$ and show that it corresponds to the norm in the RKHS induced by $R_\infty$. To do this, consider the sequence of penalties indexed by $\tau$ as 
\begin{equation}
    \label{eq:Qtau}
    Q_\tau(f) = \frac{c}{2}\|f\|_2^2+\frac{\gamma}{2}\|f\|_{K_\tau}^2 = \frac{c}{2}\|f\|_2^2+\frac{\gamma}{2}q_\tau(f)
\end{equation}
with $\|f\|_{K_\tau}$ given by Eq. \eqref{eq:Ktau_norm}. The pointwise limit of this penalty is given by
\begin{equation}
    \label{eq:Qinfty}
    Q_{\infty}(f) =\frac{c}{2}\|f\|_2^2+\frac{\gamma}{2}\underset{f=g+X\beta}{\inf}\|g\|_{K}^2=\frac{c}{2}\|f\|_2^2+\frac{\gamma}{2}q_\infty(f)  
\end{equation}
which is the square of a proper norm due to the $2-$norm term. Note that $Q_\infty(f)$ is finite on $\mathcal{H}_K\bigcup\SPAN{x_1,...,x_p} $ and infinite otherwise. The proof is as follows: For any $\tau$,$q_\tau(f)\ge q_\infty(f)$, and for any decomposition $f=g+X\beta$, we have that $q_\tau(f)\le  \|g\|_{K}^2+\frac{1}{\tau^2}\|\beta\|_2^2$. Taking $\tau\to \infty$ and then taking the infimum yields the pointwise limit. So, we have that, as $\tau\to\infty$,
\begin{equation}
    \begin{split}
        Q_\tau(f)\to& Q_{\infty}(f)\\
        R_\tau \to& R_\infty
    \end{split}
\end{equation}
and now we must associate these with each other. To accomplish this, we must first establish some properties of $Q_\infty$ as a quadratic form, notably that it is convex and lower-semicontinuous.
\begin{lemma}\label{lem:Qinfty}
    The penalty functional $Q_\infty(f)=\frac{c}{2}\|f\|_2^2+\frac{\gamma}{2}q_\infty(f)  $ is convex and lower-semicontinuous on $\mathcal{L}^2(\Omega)$.
\end{lemma}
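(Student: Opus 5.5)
Convexity and lower semicontinuity are both additive properties, so I will treat the two terms of $Q_\infty$ separately. The term $\frac{c}{2}\|f\|_2^2$ is a continuous convex function on $\mathcal{L}^2(\Omega)$, so the work is entirely in $q_\infty(f)=\inf_{f=g+X\beta}\|g\|_K^2$, taken as $+\infty$ when $f\notin\mathcal{H}_K+\SPAN{x_1,\dots,x_p}$. A sum of a convex lsc function and a convex continuous function is convex and lsc, with values in $(-\infty,\infty]$.

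\textbf{Convexity.} I will use that $q_\infty$ is an infimal projection of the jointly convex function $\Phi(f,\beta)=\|f-X\beta\|_K^2$, with the value $+\infty$ if $f-X\beta\notin\mathcal{H}_K$. Concretely, take $f_1,f_2$ with $q_\infty(f_i)<\infty$ (otherwise there is nothing to prove) and let $\theta\in[0,1]$. Choose near-optimal decompositions $f_i=g_i+X\beta_i$. Then
\[
\theta f_1+(1-\theta)f_2=\bigl(\theta g_1+(1-\theta)g_2\bigr)+X\bigl(\theta\beta_1+(1-\theta)\beta_2\bigr)
\]
is an admissible decomposition. Convexity of $\|\cdot\|_K^2$ then gives the inequality up to $\varepsilon$, and letting $\varepsilon\to0$ finishes this part.

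\textbf{Lower semicontinuity.} Let $f_n\to f$ in $\mathcal{L}^2$ with $\liminf q_\infty(f_n)=L<\infty$. Pass to a subsequence attaining the liminf and choose decompositions $f_n=g_n+X\beta_n$ with $\|g_n\|_K^2\le q_\infty(f_n)+1/n$. The plan has three steps.
\begin{enumerate}
\item \emph{$(g_n)$ is bounded.} This is immediate, since $\|g_n\|_K^2\le q_\infty(f_n)+1/n$ and the right side tends to $L$.
\item \emph{$(\beta_n)$ is bounded.} The inclusion $\mathcal{H}_K\hookrightarrow\mathcal{L}^2$ is continuous, because $k$ is continuous on a compact domain and $\mu$ is finite. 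Hence $\|g_n\|_2$ is bounded, so $X\beta_n=f_n-g_n$ is bounded in $\mathcal{L}^2$. Since $X$ is injective on the finite-dimensional space $\mathbb{R}^p$, it is bounded below, i.e.\ $\|X\beta\|_2\ge m\|\beta\|_2$ with $m>0$. Therefore $\beta_n$ is bounded.
\item \emph{Extract limits and conclude.} Pass to a further subsequence with $\beta_n\to\beta$ in $\mathbb{R}^p$ and $g_n\weakto g$ weakly in $\mathcal{H}_K$. The inclusion is bounded linear, so weak convergence in $\mathcal{H}_K$ implies weak convergence in $\mathcal{L}^2$. Hence $g_n\weakto g$ in $\mathcal{L}^2$. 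Also $g_n=f_n-X\beta_n\to f-X\beta$ strongly in $\mathcal{L}^2$, so $g=f-X\beta$ by uniqueness of weak limits. Thus $f=g+X\beta$ is admissible. Weak lower semicontinuity of the Hilbert norm then gives
\[
q_\infty(f)\le\|g\|_K^2\le\liminf\|g_n\|_K^2=L .
\]
\end{enumerate}

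The main obstacle is step 2, controlling $\beta_n$. Since $\SPAN{x_j}$ may intersect $\mathcal{H}_K$, the decomposition is non-unique, and without a bound on $\beta_n$ the coefficients could escape to infinity. The argument above avoids needing uniqueness. It relies only on the $\mathcal{L}^2$ boundedness of $g_n$, which comes from the continuous embedding $\mathcal{H}_K\hookrightarrow\mathcal{L}^2$, together with the lower bound $\|X\beta\|_2\ge m\|\beta\|_2$. That lower bound needs the $x_j$ to be linearly independent as elements of $\mathcal{L}^2(\Omega,\mu)$, so I would note this reading of the injectivity assumption, e.g.\ when $\mu$ has full support. Alternatively, one can state lsc directly in the form: $\{q_\infty\le L\}=X\mathbb{R}^p+\{\|g\|_K^2\le L\}$ is the sum of a compact-in-$\mathcal{L}^2$ ball image and a finite-dimensional subspace, hence closed.
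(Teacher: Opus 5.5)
Your proposal is correct and follows essentially the same route as the paper. Both arguments take near-optimal decompositions $f_n=g_n+X\beta_n$, bound $g_n$ in $\mathcal{H}_K$ and hence in $\mathcal{L}^2$ via the continuous embedding, bound $\beta_n$ using injectivity of $X$, extract $g_n\weakto g$ and $\beta_n\to\beta$, identify $f=g+X\beta$ by uniqueness of weak limits, and finish with weak lower semicontinuity of $\norm{\cdot}_K$. Your version is tidier on points the paper glosses over: passing first to a subsequence realizing the $\liminf$ (needed for the uniform bound on $\norm{g_n}_K$), taking the weak limit in $\mathcal{H}_K$ and transferring it to $\mathcal{L}^2$, actually proving convexity of $q_\infty$, and noting that $\norm{X\beta}_2\ge m\norm{\beta}_2$ requires the $x_j$ to be linearly independent in $\mathcal{L}^2(\Omega,\mu)$, not merely in $C(\Omega)$.
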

\begin{proof}
    As the sum of convex functionals, $Q_\infty$ is itself convex. Now we proceed to showing it is lower semicontinuous. Since $\|f\|_2^2$ is a continuous functional, we must show this for $q_\infty(f)$. Suppose that $f_n\to f\in \mathcal{L}^2(\Omega)$ and let $L=\underset{n\to\infty}{\liminf}Q_{\infty}(f_n)$. In the case $L=\infty$, clearly $Q_\infty(f)\le L$. Now suppose $L$ is finite. Now for each $n$, since $q_\infty$ is an infimum over acceptable decompositions of $f$ there is a $g_n,\beta_n$ such that $f_n=g_n+X\beta_n$ and $\|g_n\|_{K}^2 \le q_\infty(f_n)+\frac{1}{n} $. Thus the supremum over $n$ of $\|g_n\|_K$ is finite, and since $\mathcal{H}_K(\Omega)$ is continuously embedded in $\mathcal{L}^2(\Omega)$, there is some $C_k$ such that $\|g_n\|_2^2\le C_k\|g_n\|_{K}^2$, so $\|g_n\|_2<\infty$. Since $g_n$ is bounded, it has a weakly convergent subsequence $g_{n'}\rightharpoonup \hat{g}$. Likewise since $f_n\to f$, $\|X\beta_n\|_2\le \|f_n\|_2^2+\|g_n\|_2^2$, which are both bounded sequences, and since $X$ is an injective finite-rank operator $\|\beta_n\|<\infty$ as well. Thus the subsequence $\beta_{n'}$ has a convergent subsequence $\beta_{n''}\to \hat{\beta}$. Thus, there is a subsequence $f_{n''}\weakto f''=\hat{g}+X\hat{\beta}$. But since weak limits are unique, $f''=f$. Now we know that, by weak convergence, $\norm{g}_K\le \liminf\|g_n\|_K$, which is bounded. 
    Since the $g_n$ were chosen to satisfy $\|g_n\|_{K}^2
\leq q_\infty(f_n)+\frac{1}{n}$ we obtain $q_\infty(f)\leq\liminf_{n\to\infty}q_\infty(f_n)$.

Therefore $q_\infty$ is lower semicontinuous. Since $\|f\|_2^2$ is continuous, $Q_\infty$ is lower semicontinuous as well.
    and so the penalty is lower semicontinuous.
\end{proof}

So, we have shown that the penalties $Q_\tau$ converge to a well-defined penalty as $\tau\to\infty$, and we have additionally shown that the limiting penalty is convex and is lower-semicontinuous on $\mathcal{L}^2(\Omega)$. The main proposition of the paper, presented here, is that this penalty is related in the expected way to the RKHS $\mathcal{H}_{R_\infty}$. This provides an analogous result to the one for finite $\tau$ and the result given in \cite{pmlr-v54-flaxman17a} about the equivalence of norms between the original and equivalent kernels.

\begin{prop}
    The penalty term $Q_\infty$ is equal to one-half of the squared norm on the RKHS $\mathcal{H}_{R_\infty}$.
\end{prop}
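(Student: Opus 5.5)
The plan is to identify $Q_\infty$ with $\tfrac12\|\cdot\|_{R_\infty}^2$ by checking that both are the same quadratic form. Write $Q_\infty(f)=\tfrac12\inprod{f}{T f}$ on its effective domain $D=\mathcal{H}_K+\SPAN{x_1,\dots,x_p}$; the claim is then that $T=R_\infty^{-1}$ on $\mathcal H_{R_\infty}$, with both sides infinite off that space. I would argue through monotone convergence of quadratic forms rather than through explicit inverses.

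\textbf{Step 1: the finite-$\tau$ norms.} Recall from the finite-$\tau$ discussion that $Q_\tau(f)=\tfrac12\inprod{f}{R_\tau^{-1}f}=\tfrac12\|f\|_{R_\tau}^2$ for each finite $\tau$. This is exactly the identity $cI+\gamma K_\tau^{-1}=R_\tau^{-1}$ used to define $R_\tau$, and the two norms are equivalent.

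\textbf{Step 2: monotonicity and the limit of forms.} Since $k_\tau=k+\tau^2XX^*$ is nondecreasing in $\tau$, the norms $\|f\|_{K_\tau}^2$ given by Eq.~\eqref{eq:Ktau_norm} are nonincreasing in $\tau$. Hence $Q_\tau\downarrow Q_\infty$ pointwise, as already shown. Equivalently, the operators $R_\tau$ increase in the Loewner order and converge uniformly to $R_\infty$ by Lemma~\ref{lemma:Rinfty}.

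For a nondecreasing family of bounded positive operators $R_\tau\uparrow R_\infty$, the associated quadratic forms $\|f\|_{R_\tau}^2=\|R_\tau^{-1/2}f\|_2^2$ decrease to a limit. I would identify that limit with $\|f\|_{R_\infty}^2=\|R_\infty^{-1/2}f\|_2^2$, taken as $+\infty$ off $\operatorname{ran}R_\infty^{1/2}=\mathcal H_{R_\infty}$. The route is the variational formula
\[
\|f\|_{R}^2=\sup_{h\in\mathcal{L}^2(\Omega)}\bigl(2\inprod{f}{h}-\inprod{h}{Rh}\bigr).
\]
For fixed $h$, the expression inside the supremum decreases in $\tau$ and converges to its $R_\infty$ value by uniform convergence. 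This gives $\|f\|_{R_\infty}^2\le\lim_\tau\|f\|_{R_\tau}^2=2Q_\infty(f)$ immediately.

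\textbf{Step 3: the reverse inequality (main obstacle).} We need $2Q_\infty(f)\le\|f\|_{R_\infty}^2$, which means exchanging the limit in $\tau$ with the supremum over $h$. The natural tool is lower semicontinuity. Lemma~\ref{lem:Qinfty} says $Q_\infty$ is a closed convex quadratic form on $\mathcal{L}^2(\Omega)$, so it is represented by a positive self-adjoint operator $T$ with $Q_\infty(f)=\tfrac12\|T^{1/2}f\|^2$. The monotone convergence theorem for nonincreasing sequences of closed forms (Kato, or Simon's version) then says that $Q_\tau\downarrow Q_\infty$ with $Q_\infty$ closed implies $R_\tau=(\text{operator of }2Q_\tau)^{-1}$ converges strongly to $(\text{operator of }2Q_\infty)^{-1}=T^{-1}$. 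Invertibility comes from the $c\|f\|_2^2$ term, which gives $T\ge cI$. Uniqueness of limits, together with Lemma~\ref{lemma:Rinfty}, gives $T^{-1}=R_\infty$. Therefore $2Q_\infty(f)=\|R_\infty^{-1/2}f\|_2^2=\|f\|_{\mathcal H_{R_\infty}}^2$, and the domains coincide: $D=\operatorname{ran}R_\infty^{1/2}=\mathcal H_{R_\infty}$.

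\textbf{Fallback for Step 3.} If the citation is not wanted, I would verify $T^{-1}=R_\infty$ directly. The direct check is to compute the minimizer in $Q_\infty(f)-\inprod{f}{u}$ over $f=g+X\beta$. The first-order conditions in $g$ give $(cK+\gamma I)g$-type equations, and the conditions in $\beta$ give $X^*(\cdot)=0$. Solving these reproduces the formula $\frac1c\bigl(I-\gamma(A^{-1}-A^{-1}XS^{-1}X^*A^{-1})\bigr)u$, which is the Schur-complement structure already visible in Lemma~\ref{lemma:Rinfty}.
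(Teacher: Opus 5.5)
Your proof is correct, but it takes a different route from the paper. The paper does not use Lemma~\ref{lemma:Rinfty} in this proof at all. It applies Fenchel--Moreau to $Q_\infty$, which is closed by Lemma~\ref{lem:Qinfty}. It then writes $q_\infty$ as the infimal convolution of $\norm{\cdot}_K^2$ with the indicator of $\SPAN{x_1,\dots,x_p}$. Next it computes $Q_\infty^*$ as a minimization over $v$ with $X^*v=0$ using Lagrange multipliers, reading off $Q_\infty^*(h)=\frac12\inprod{h}{R_\infty h}$ from the explicit formula, and finally conjugates back. You instead identify $R_\infty$ without any computation. Step~1 together with $Q_\tau\downarrow Q_\infty$ puts you in the setting of the monotone convergence theorem for nonincreasing closed forms. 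Lemma~\ref{lem:Qinfty} supplies exactly the closedness of the limit form that this theorem requires. Lemma~\ref{lemma:Rinfty}, combined with uniqueness of strong limits, then forces $T^{-1}=R_\infty$. What each route buys: yours makes the paper's slogan ``limit of penalties corresponds to limit of kernels'' literal, and it would carry over to any monotone family whose limiting penalty is closed. The paper's route is more self-contained and re-derives the closed form of $R_\infty$ independently of Lemma~\ref{lemma:Rinfty}. Your fallback is essentially the primal counterpart of the paper's dual calculation, with the representation theorem for closed forms added. You can also avoid the Kato/Simon citation in Step~3. For $f=R_\infty h$, set $f_\tau=R_\tau h$, so that $f_\tau\to f$ by Lemma~\ref{lemma:Rinfty} and $2Q_\infty(f_\tau)\le 2Q_\tau(f_\tau)=\inprod{h}{R_\tau h}\to\inprod{h}{R_\infty h}=\norm{f}_{R_\infty}^2$. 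Lower semicontinuity then gives $2Q_\infty(f)\le\norm{f}_{R_\infty}^2$ on $\mathrm{ran}\,R_\infty$. Since $\mathrm{ran}\,R_\infty$ is a core for the $R_\infty$-norm, a second use of lower semicontinuity extends this to all of $\mathrm{ran}\,R_\infty^{1/2}=\mathcal{H}_{R_\infty}$, and together with your Step~2 this gives equality everywhere.
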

\begin{proof}
To begin the proof, we note that since Lemma \ref{lem:Qinfty} says that $Q_\infty$ is proper, convex, and lower semicontinuous, it implies that the penalty term $Q_\infty(f)$ is its own convex biconjugate by the Fenchel-Moreau Theorem \cite{correa2023fundamentals}. Our goal, then, is to show that the convex biconjugate of $Q_\infty$ is given by $\frac{1}{2}\norm{f}^2_{R_\infty}$.

We can write the seminorm term $q_\infty(f)$ as
\begin{equation}
    q_\infty(f)=\underset{\beta\in \mathbb{R}^p}{\inf}\begin{cases}
        \norm{f-X\beta}_K^2&f-X\beta\in \mathcal{H}_K\\
        +\infty&else
    \end{cases}
\end{equation}
which is an infimal convolution of the two functionals
\begin{equation}
    \begin{split}
        \phi(g) =&\begin{cases}
        \norm{g}_K^2&g\in \mathcal{H}_K\\
        +\infty&else
    \end{cases} \\
    \chi(h) =& \begin{cases}0&h\in \SPAN{x_1,...,x_p}\\
    +\infty & else \\ \end{cases}
    \end{split}
\end{equation}
so we can write the convex conjugate of $q_\infty$ as the sum of the convex conjugates of $\phi$ and $\chi$. These can be calculated as 
\begin{equation}
    \begin{split}
        \phi^*(h) & = \underset{g\in \mathcal{H}_K}{\sup}\left( \inprod{h}{g}-\|g\|_K^2\right) \\ 
        =& \underset{g\in \mathcal{H}_K}{\sup}\left( \inprod{Kh}{g}_K-\|g\|_K^2 \right)\\
        = &\frac{1}{4}\|Kh\|_K^2= \frac{1}{4}\inprod{h}{Kh}
    \end{split}
\end{equation}
and 
\begin{equation}
        \chi^*(h) = \begin{cases}
            0&h \perp \SPAN{x_1,...,x_p}\\
            +\infty &else
        \end{cases}
\end{equation}
so the convex conjugate of $q_\infty$ is given by
\begin{equation}
    q^*_{\infty}(h) = \begin{cases}
        \frac{1}{4}\inprod{h}{Kh} & h\perp \SPAN{x_1,...,x_p}\\
        +\infty & else
    \end{cases}
\end{equation}
Now the convex conjugate of $\norm{f}_2^2$ is given by $\frac{1}{4}\norm{h}_2^2$, and as $Q_\infty(f)=\frac{c}{2}\norm{f}_2^2+\frac{\gamma}{2}q_\infty(f)$ its convex conjugate is the infimal convolution of the convex conjugates of these terms:
\begin{equation}
    Q^*_{\infty}(h) = \underset{v\in \mathcal{L}^2(\Omega), X^*v=0}{\inf}\left[ \frac{1}{2c}\norm{h-v}_2^2 + \frac{1}{2\gamma}\inprod{v}{Kv}\right]
\end{equation}
To obtain a closed form, fix h and minimize
\begin{equation}
    \label{eq:minFhv}
    F_h(v) = \frac{1}{2c}\norm{h-v}_2^2 + \frac{1}{2\gamma}\inprod{v}{Kv}
\end{equation}
subject to the constraint that $X^*v=0$, equivalently $v\perp \text{Range}(X)$. This can be done via Lagrange multipliers by solving
\begin{equation}\label{eq:v_soln}
\begin{split}
    \frac{1}{c}(v-h)&+\frac{1}{\gamma}Kv+X\psi = 0\implies \\
    v =& A^{-1}\left(\gamma h-c\gamma X\psi\right)
    \end{split}
\end{equation}
Now we impose the condition $X^*v=0$, so 
\begin{equation}
    X^*A^{-1}\left(\gamma h-c\gamma X\psi\right)=0 \implies \psi = \frac{1}{c}\left(X^*A^{-1}X\right)^{-1}X^*A^{-1}h
\end{equation}
which tells us that the optimizer of $F_h$ is given by
\begin{equation}
    \label{eq:vmin}
    v=\gamma\left(A^{-1}-A^{-1}X(X^*A^{-1}X)^{-1}X^*A^{-1}\right)h
\end{equation}
To simplify $Q^*_\infty$ further, we can expand the 2-norm term and see that
\begin{equation}
    Q^*_\infty(h) = \underset{v\in \mathcal{L}^2(\Omega), X^*v=0}{\inf} \left[\frac{1}{2c}\|h\|_2^2-\frac{1}{c}\inprod{h}{v}+\frac{1}{2c\gamma}\inprod{v}{Av}\right]
\end{equation}
Writing $Av=\gamma h-c\gamma X\psi$ and taking the inner product with $v$, noting that $\inprod{v}{X\psi}=0$, we get that
\begin{equation}
\begin{split}
    Q^*_\infty(h) =& \frac{1}{2c}\|h\|_2^2-\frac{1}{2}\inprod{h}{v}+\frac{1}{2c}\inprod{h}{v} \\
    =&\frac{1}{2}\inprod{h}{\frac{1}{c}
    \left(I-\gamma\left(A^{-1}-A^{-1}X(X^*A^{-1}X)^{-1}X^*A^{-1}\right)\right)h} \\
    =&\frac{1}{2}\inprod{h}{R_\infty h}
    \end{split}
\end{equation}
Now the convex biconjugate of $Q_\infty$ is thus given by the extended-value quadratic form
\begin{equation}
    Q^{**}_\infty(f) =\ \frac{1}{2}\inprod{f}{R_{\infty}^{-1}f} =\frac{1}{2}\|f\|^2_{R_\infty}
\end{equation}
Now, by the Fenchel-Moreau Theorem, $Q^{**}_\infty=Q_\infty$. Thus, the limiting penalty $Q_\infty$ is exactly half the penalty derived from the norm in the RKHS $\mathcal{H}_{R_\infty}(\Omega)$, as desired.
\end{proof}

This is the central result of the paper. From this we can see that, not only can we derive an equivalent kernel formulation for the problem for finite $\tau$ in which the kernels $R_\tau$ and $K_\tau$ describe equivalent function spaces, but in the limit $\tau\to\infty$ we have a similar result: The penalty term converges to one that is equivalent to a norm on the RKHS with kernel operator $R_\infty$. From this, we now know that, in the limit $\tau\to\infty$, we can write the problem given in Eq. \eqref{eq:k_kernel_tau} as
\begin{equation}\label{eq:soln_tau_infty_equiv}
    \ell(f) = \sum_{i=1}^d\log\left(\frac{c}{2}f(s_i)^2\right)-\frac{1}{2}\|f\|_{R_\infty}^2
\end{equation}
which has the form of an empirical risk functional $\sum_{i=1}^d\log\left(\frac{c}{2}f(s_i)^2\right)$ and a regularization term $\frac{1}{2}\|f\|^2_{R_\infty}$. This immediately suggests the following corollary, which says that the solution to this problem has a finite-dimensional representation:
\begin{corollary}\label{cor:representer_cor}
    The maximizer of the likelihood functional in Equation \eqref{eq:soln_tau_infty_equiv} has the form 
    \begin{equation}\label{eq:repthm_minimizer}
    f(s)=\sum_{i=1}^d\alpha_ir_\infty(s,s_i)
    \end{equation}
    where $r_\infty(s,t)$ is the kernel function in Eq. \eqref{eq:rinfty_kernelfn}.
\end{corollary}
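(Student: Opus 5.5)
The corollary follows from the representer theorem, applied in the RKHS $\mathcal{H}_{R_\infty}$. The needed structure is already in place: Proposition \ref{prop:Rinfty_defines_rkhs} shows that $r_\infty$ is a continuous positive definite reproducing kernel on $\Omega\times\Omega$, and the preceding proposition identifies the limiting penalty with $\frac{1}{2}\|f\|_{R_\infty}^2$. So, in the limit, \eqref{eq:soln_tau_infty_equiv} is a functional on $\mathcal{H}_{R_\infty}$. Outside $\mathcal{H}_{R_\infty}$ the penalty is $+\infty$, so the maximization can be restricted to $\mathcal{H}_{R_\infty}$ without loss. The data term depends on $f$ only through the evaluations $f(s_1),\dots,f(s_d)$, and the regularizer is a strictly increasing function of the norm. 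This is exactly the setting of the generalized representer theorem \cite{gen_representer_thm}.

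To make the argument self-contained, I would run the standard orthogonal decomposition in $\mathcal{H}_{R_\infty}$. Let $V=\SPAN{r_\infty(\cdot,s_1),\dots,r_\infty(\cdot,s_d)}$. This is a finite-dimensional, hence closed, subspace, so any $f\in\mathcal{H}_{R_\infty}$ splits uniquely as $f=f_V+f_\perp$ with $f_V\in V$ and $f_\perp\perp V$.

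The reproducing property gives
\[
f_\perp(s_i)=\inprod{f_\perp}{r_\infty(\cdot,s_i)}_{R_\infty}=0 .
\]
Hence $f(s_i)=f_V(s_i)$ for every $i$, and the empirical term $\sum_i\log\left(\frac{c}{2}f(s_i)^2\right)$ is unchanged when $f_\perp$ is dropped. On the other hand,
\[
\|f\|_{R_\infty}^2=\|f_V\|_{R_\infty}^2+\|f_\perp\|_{R_\infty}^2 ,
\]
so the penalty strictly decreases whenever $f_\perp\neq0$. Therefore any maximizer must have $f_\perp=0$, that is, $f=\sum_i\alpha_i r_\infty(\cdot,s_i)$, which is \eqref{eq:repthm_minimizer}.

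The main obstacle is a point of justification rather than a computation. The previous proposition identifies $Q_\infty$ with $\frac12\inprod{f}{R_\infty^{-1}f}$, and I must check that this quadratic form really is the squared norm of the RKHS generated by $r_\infty$. The tool for this is the standard fact that, for a positive self-adjoint integral operator with continuous kernel, $\mathcal{H}_{R_\infty}=\text{Range}(R_\infty^{1/2})$ with norm $\|R_\infty^{-1/2}f\|_2$. That fact requires $R_\infty$ to be injective, and I expect this to be the hardest step to verify. I would establish it from the formula for $R_\infty$: the bracket $A^{-1}-A^{-1}X(X^*A^{-1}X)^{-1}X^*A^{-1}$ is bounded above by $A^{-1}\preceq\gamma^{-1}I$. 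It annihilates $A^{-1}$-weighted projections onto $X$, which I expect to yield $R_\infty\succeq 0$ with kernel trivial exactly when no nonzero $h$ satisfies $\gamma(\cdots)h=h$. If injectivity fails, I would instead interpret $R_\infty^{-1}$ as a pseudo-inverse on the closure of its range; the decomposition argument above is unaffected, because it uses only the reproducing property and the Pythagorean identity.

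A secondary remark concerns existence of the maximizer. The corollary asserts only the form a maximizer must take, so the argument above suffices, since every maximizer lies in $V$. Existence would follow by reducing to the finite-dimensional problem in $\alpha$, which I would note but not prove.
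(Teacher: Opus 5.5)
Your proposal is correct and is essentially the paper's argument. The paper proves the corollary by citing the representer theorem on top of Proposition \ref{prop:Rinfty_defines_rkhs} and the identification $Q_\infty=\frac12\|f\|_{R_\infty}^2$, and your orthogonal decomposition onto the span of $r_\infty(\cdot,s_1),\dots,r_\infty(\cdot,s_d)$ is the standard proof of that theorem written out. Your injectivity worry is not load-bearing, since the decomposition uses only the reproducing property and the preceding proposition already supplies the norm identification; your sketch of it is circular, though, and the clean route is $R_\infty=R_0+\frac{\gamma}{c}A^{-1}XS^{-1}X^*A^{-1}\succeq R_0$, which gives $\ker R_\infty\subseteq\ker R_0=\ker K$.
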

\begin{proof}
    This is a consequence of all of the preceeding work and the representer theorem.
\end{proof}


\section{Recovering the Fixed Effects}

While we are able to fit the model in terms of the kernel $r_\infty$, for scientific interpretation it is important to be able to uncover the fixed effects. This can be done by expanding the kernel $r_\infty$, yielding a representation of the solution in terms of the kernel $r_0$ and the actions along the fixed-effect directions. This is important because it allows us to recover the original, desired decomposition of the process $f(s)$ in terms of the original decomposition. 

\begin{prop}
    The solution to the optimization problem presented in Corollary \ref{cor:representer_cor} can be written as
    \begin{equation}
        f(s) = g(s)+X\beta
    \end{equation}
    for some function $g(s)\in \mathcal{H}_K(\Omega)$ and $\beta\in \mathbb{R}^p$.
\end{prop}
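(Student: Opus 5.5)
Starting from Corollary \ref{cor:representer_cor}, write the maximizer as $f(s)=\sum_{i=1}^d\alpha_i r_\infty(s,s_i)$ and expand $r_\infty$ using Eq. \eqref{eq:rinfty_kernelfn}. Since $r_\infty$ is the kernel of $R_\infty=\frac1c\bigl(I-\gamma A^{-1}+\gamma A^{-1}XS^{-1}X^*A^{-1}\bigr)$ with $S=X^*A^{-1}X$, the term $\frac{1}{c}(I-\gamma A^{-1})$ is exactly the $\tau=0$ equivalent operator $R_0=K(cK+\gamma I)^{-1}=KA^{-1}$. This follows from the identity $I-\gamma A^{-1}=cKA^{-1}$. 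Hence
\begin{equation*}
f=\sum_i\alpha_i r_0(\cdot,s_i)+\frac{\gamma}{c}\sum_{j,l}S^{-1}_{jl}\,a_j(\cdot)\Bigl(\sum_i\alpha_i a_l(s_i)\Bigr),
\end{equation*}
with $a_j=A^{-1}x_j$. The constant in front of the finite-rank term should be checked against the normalization used in Eq. \eqref{eq:rinfty_kernelfn}, but it does not affect the argument.

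The first piece lies in $\mathcal H_K$. Indeed $r_0(\cdot,s_i)=KA^{-1}$ applied to the point evaluation, and $R_0$ defines an RKHS norm-equivalent to $\mathcal H_K$ (the result of \cite{pmlr-v54-flaxman17a} cited for finite $\tau$). Each $r_0(\cdot,s_i)$ is therefore an element of $\mathcal H_{R_0}=\mathcal H_K$.

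The main work is the finite-rank piece, since $a_j$ need not lie in $\mathcal H_K$ when $x_j\notin\mathcal H_K$. I would use the resolvent identity
\begin{equation*}
A^{-1}=\frac{1}{\gamma}\bigl(I-cKA^{-1}\bigr),
\end{equation*}
which gives $a_j=\frac{1}{\gamma}x_j-\frac{c}{\gamma}KA^{-1}x_j$. The second term is $K$ applied to the $\mathcal L^2$ function $A^{-1}x_j$. Functions of the form $Ku$ with $u\in\mathcal L^2(\Omega)$ lie in $\mathcal H_K$, with $\|Ku\|_K^2=\inprod{u}{Ku}$. So each $a_j$ splits as $\frac1\gamma x_j$ plus an element of $\mathcal H_K$.

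Collecting terms gives the decomposition. Set
\begin{equation*}
\beta=\frac{1}{c}S^{-1}\Bigl(\sum_i\alpha_i a(s_i)\Bigr)\in\mathbb R^p,
\end{equation*}
where $a(s_i)$ denotes the vector $(a_1(s_i),\dots,a_p(s_i))$; the coefficient is exactly $\frac{\gamma}{c}\cdot\frac{1}{\gamma}$ times the vector multiplying the $a_j$. Let $g=f-X\beta$. Then $g$ is a finite sum of elements of $\mathcal H_K$: the $r_0$ terms and the $KA^{-1}x_j$ terms.

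As a consistency check, this matches the infimum in Eq. \eqref{eq:Qinfty}: since $Q_\infty(f)<\infty$ for the maximizer, such a decomposition must exist. That remark alone would give a non-constructive proof, and I would mention it as an alternative. The only delicate point is justifying $KA^{-1}x_j\in\mathcal H_K$ and $r_0(\cdot,s_i)\in\mathcal H_K$. Both follow because $\text{Range}(K)\subset\mathcal H_K$ for a continuous Mercer kernel on compact $\Omega$, with $r_0(\cdot,s_i)=K\bigl[(A^{-1})^*\text{ applied to }\delta_{s_i}\bigr]$ understood through the norm equivalence of $\mathcal H_{R_0}$ and $\mathcal H_K$.
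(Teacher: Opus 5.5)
Your proposal is correct and follows essentially the same route as the paper: you expand $r_\infty$ into $r_0$ plus the finite-rank term, use $Aa_j=x_j$ to write $a_j=\frac{1}{\gamma}x_j-\frac{c}{\gamma}KA^{-1}x_j$, and collect the $r_0(\cdot,s_i)$ and $KA^{-1}x_j$ pieces into $g\in\mathcal{H}_K$. Your direct use of $\text{Range}(K)\subset\mathcal{H}_K$ is interchangeable with the paper's $KA^{-1}x_j=R_0x_j\in\mathcal{H}_{R_0}=\mathcal{H}_K$, and the factor $\frac{\gamma}{c}$ you flagged is indeed present in the kernel of $R_\infty$ but missing from Eq. \eqref{eq:rinfty_kernelfn}; that is why your $\beta=\frac{1}{c}S^{-1}\sum_i\alpha_i a(s_i)$ differs from the paper's $\beta_j=w_j/\gamma$ by exactly that factor, and the argument is otherwise unaffected.
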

\begin{proof}
    We will prove this by constructing the function $g(s)$ and the coefficients $\beta$. By Eq. \eqref{eq:rinfty_kernelfn}, we can write Eq. \eqref{eq:repthm_minimizer} as
    \begin{equation}\label{eq:r0_and_a}
    \begin{split}
        f(s) =& \sum_{i=1}^d\alpha_ir_0(s,s_i) + \sum_{i=1}^d\alpha_i\sum_{j,k}S^{-1}_{jk}a_j(s)a_k(s_i)\\
        =&\sum_{i=1}^d\alpha_ir_0(s,s_i) + \sum_{j=1}^p a_j(s)\underbrace{\sum_{i,k}\alpha_iS_{jk}^{-1}a_k(s_i)}_{w_j}
        \end{split}
    \end{equation}
    Now since $a_j(s)=A^{-1}x_j(s)$ we can say that 
    \begin{equation}
        cKa_j(s)+\gamma a_j(s) = x_j(s) \implies  a_j(s) = \frac{1}{\gamma}\left(x_j(s)-cKa_j(s)\right)
    \end{equation}
    which is the sum of a term in $\SPAN{x_1,...,x_p}$ and a term in $\mathcal{H}_K(\Omega)$. Note that, since $a_j=A^{-1}x_j$ and $R_0=K(cK+\gamma I)^{-1}=KA^{-1}$, we can write that $Ka_j(s)=R_0x_j(s)$.
    
    Substituting this in to Eq. \eqref{eq:r0_and_a} we can see that
    \begin{equation}
        f(s) = \underbrace{\sum_{i=1}^d\alpha_ir_0(s,s_i) -\frac{c}{\gamma}\sum_{j=1}^pw_jR_0 x_j(s)}_{\in \mathcal{H}_K(\Omega)}+\underbrace{\sum_{j=1}^p \frac{w_j}{\gamma}x_j(s)}_{\in\SPAN{x_1,...,x_p}}
    \end{equation}
    with the first inclusion following from the fact that the spaces $\mathcal{H}_K$ and $\mathcal{H}_{R_0}$ are norm-equivalent and thus coincide as vector spaces. Now, setting $\beta_j=\frac{w_j}{\gamma}$ gives the desired form of the result.
\end{proof}
This result tells us not only that the function $f(s)$ has the exact form we hoped for when writing down the model, but it also gives us a way to recover the fixed-effect coefficients directly from the $\alpha_i$. So, once the problem has been solved, we are able to analyze the fixed effects directly without a separate estimation procedure. This allows for immediate scientific interpretation of the results. Applying, for example, the Bayesian procedure of \cite{pmlr-v70-walder17a}, wherein a Laplace approximation is made to the posterior, allows for additional analysis of the posterior distribution of these coefficients to assess model calibration and accuracy.



\section{Discussion}

In this paper, we have shown that it is possible to incorporate fixed effects inot permanental process models without losing the representer structure that makes these models so attractive to begin with. In the diffuse prior limit, we see that the penalty term and the equivalent kernel converge and describe the same RKHS, allowing for direct application of the representer theorem. Moreover, the result can be decomposed into a fixed effects term and a term corresponding to the RKHS of the original kernel, allowing the fixed effect coefficients to be recovered in closed for and making the model interpretable. 

This additionally has a similar interpretation to smoothing spline estimation. In the diffuse prior limit, this model treats $\SPAN{x_1,...,x_p}$ as directions in the limiting RKHS $\mathcal{H}_{R_\infty}$ along which estimation should be unpenalized. This appears again in the construction of the estimated function as $\hat{f}(s)=\hat{g}(s)+X\hat{\beta}$, similar to the result fit via smoothing spline methods based on similar ideas \cite{wahba1990spline}. 
Notably, this can be done with relatively minimal assumptions on the fixed effects $x_j(s)$, and also allows these covariates to enter the model in a relatively clean way, allowing for straightforward interpretation of the fixed effect coefficients. This can be contrasted, for example, with the models of \cite{kim2022fast} and \cite{kim2023survivalpermprocs}, which are similar but define $f$ to be a Gaussian process over the covariates. This leads to what the authors call a "triply intractable" problem that requires calculation of integrals over the covariate space and makes interpretation more difficult, even though the result still has the representer structure. 

Several directions exist for future work. In the immediate future, this needs to be accompanied by efficient computational schemes, for example based on the results of \cite{pmlr-v54-flaxman17a}, to make this method practical. Although the representer theorem reduces this to a finite optimization problem, it is still necessary to investigate efficient implementation as manipulation of large matrices may become inefficient. 

A second direction involves empirically studying the derived estimator of both the fixed and random effects components, and how varying the strength of wither affects the recovery of the other. It will also be important to understand how misspecification of the kernel matrix affects reconstruction of the fixed effects, and to compare the results derived from this method with more traditional log-Gaussian Cox models, permanental processes without fixed effects, and other candidate models. 

Finally, the present work considers a finite collection of fixed effects with a diffuse Gaussian prior. Extensions to more general effects, alternative priors, and other forms of partially penalized RKHS estimation may be possible using similar arguments. More broadly, the connection between the construction presented here and methods such as smoothing splines suggests that the same approach may be useful for incorporating unpenalized parametric components into other kernel-based point process models.

\newpage
\bibliography{refs.bib}
\end{document}